\begin{document}
\newtheorem{definition}{Definition}[section]
\newtheorem{theorem}[definition]{Theorem}
\newtheorem{lemma}[definition]{Lemma}
\newtheorem{proposition}[definition]{Proposition}
\newtheorem{examples}[definition]{Examples}
\newtheorem{corollary}[definition]{Corollary}
\def\square{\Box}
\newtheorem{remark}[definition]{Remark}
\newtheorem{remarks}[definition]{Remarks}
\newtheorem{exercise}[definition]{Exercise}
\newtheorem{example}[definition]{Example}
\newtheorem{observation}[definition]{Observation}
\newtheorem{observations}[definition]{Observations}
\newtheorem{algorithm}[definition]{Algorithm}
\newtheorem{criterion}[definition]{Criterion}
\newtheorem{algcrit}[definition]{Algorithm and criterion}

\newenvironment{prf}[1]{\trivlist
\item[\hskip \labelsep{\it
#1.\hspace*{.3em}}]}{~\hspace{\fill}~$\square$\endtrivlist}
\newenvironment{proof}{\begin{prf}{Proof}}{\end{prf}}

\title{Effective descent for differential operators}
\author{
Elie {Compoint}\thanks{
D\'epartement de math\'ematiques,
Universit\'e de Lille I,
59655 , Villeneuve d'Ascq Cedex, France.
{\tt compoint@math.univ-lille1.fr} },\
Marius {van der Put}\thanks{
University of Groningen, Department of Mathematics
P.O. Box 407, 9700 AK Groningen, The Netherlands.
{\tt mvdput@math.rug.nl} },\
Jacques-Arthur {Weil}\thanks{ XLIM, D\'epartement de Math\'ematiques et Informatique,
       Universit\'e de Limoges,
       123 avenue Albert Thomas, 87060 Limoges { Cedex}, France.
       {\tt jacques-arthur.weil@unilim.fr} } \\
        \small{ Dedicated to the memory of Jerry Kovacic.}  }

\date{September 2009}

\maketitle

\begin{abstract} A theorem of N.~Katz \cite{Ka} p.45, states that an irreducible differential operator $L$
over a suitable differential field $k$, which has an isotypical decomposition over the algebraic closure
of $k$, is a tensor product $L=M\otimes _k N$ of an absolutely irreducible operator $M$ over $k$ and an irreducible operator $N$ over $k$ having a finite differential Galois group. Using the existence of the tensor decomposition $L=M\otimes N$, an algorithm is given in \cite{C-W}, which computes an absolutely irreducible factor $F$ of $L$ over a finite extension of $k$. Here, an algorithmic approach to finding
$M$ and $N$ is given, based on the knowledge of $F$. This involves a subtle descent problem for differential operators which can be solved for explicit differential fields $k$ which are $C_1$-fields.
\end{abstract}

\section{Introduction}
$C$ denotes an algebraically closed field of characteristic 0 and the differential field $k$ is a finite
extension of $(C(z), \partial =\frac{d}{dz})$. The algebraic closure of $k$ will be written as $\overline{k}$.
Let $L\in k[\partial ]$ be a (monic) differential operator. The operator $L$ is called irreducible if it does not factor over $k$ and absolutely irreducible if it does not factor over $\overline{k}$. Here we are interested
in the following special situation:\\

\noindent 
{\it  $L$ is irreducible and $L$ factors over $\overline{k}$ as a product $F_1\dots F_s$
of $s>1$ equivalent  (monic) absolutely irreducible operators.} \\

There are algorithms for factoring $L$ over $k$, i.e., as element of $k[\partial ]$ (\cite{H1,H2,vdP-S}).
Algorithms for finding factors of order $1$ in $\overline{k}[\partial ]$ are proposed in \cite{S-U, H-R-U-W, vdP-S}.
An algorithm for finding factors of arbitrary order in $\overline{k}[\partial ]$ is given in \cite{C-W}.

According to N.~Katz \cite{Ka}, Proposition 2.7.2,  p.45,  the assumption on $L$ implies that $L$ is a tensor product $M\otimes N$ of monic 
operators in $k[\partial ]$ such that $M$ is absolutely irreducible and the irreducible operator
$N$ has a finite differential Galois group (or equivalently all its solutions are algebraic over $k$). 
We will present a quick, down-to-earth proof of this in terms of differential modules over $k$. Further we note
that the converse statement is immediate because $N$ decomposes over $\overline{k}$ as a ``direct sum''
(least common left multiple)
of  operators $\partial -\frac{f'}{f}$ with $f\in \overline{k}^*$.

The absolute factorization algorithm in \cite{C-W} uses the existence of this tensor decomposition to ensure its correctness but,
in full generality, the problem of computing $M$ and $N$ is yet left open in there. For $M$ or $N$ of small order,
methods for detecting and computing $M$ and $N$ are given in \cite{C-W,N-vdP} and particularly in 
\cite{H3} where additional references can be found. We will illustrate this (and propose another method) at the 
end of the paper.

The problem which we address to produce the operators $M,N\in k[\partial ]$ by some decision procedure for 
$M$ and $N$ of arbitrary order. It follows
from $L=M\otimes N$ that $F_1\in \overline{k}[\partial ]$  is equivalent to $M$ (i.e., $F_1$ descends
to $k$). Let $K_1\supset k$ be the smallest Galois extension such that 
$F_1\in K_1[\partial ]$ (or equivalently 
$K_1$ is the field extension of $k$ generated by all the coefficients of all $F_i$). One might think that the equivalence between $F_1$ and $M$, seen as elements of $K_1[\partial ]$, takes place over $K_1$. However, in general, a (finite) extension $K'\supset K_1$ is needed for this equivalence. The title of this paper refers to this descent problem\footnote{Techniques for {\em arithmetic} descent were proposed in \cite{H-P}, where the case of a differential
field $k$ with non-algebraically closed constant field is handled}.  \\

Our method for finding $K'$ is as follows. First the smallest extension
$K\supset K_1$ is computed which guarantees that the factors $F_1,\dots ,F_s$
are equivalent over the field $K$. Using these equivalences,
  a certain 2-cocyle $c$ for $Gal(K/k)$ with values in $C^*$, i.e. the obstruction for the descent of $F_1$, is computed. Since $k$ is a $C_1$-field, the 2-cocycle $c$ becomes trivial over a finite (cyclic) computable extension $K'\supset K$~: we give a construction of $K'$ in section 3.1.2, particularly part (4) of remark 3.4 which produces first order operators having the same obstruction to descent and for which the problem can be solved. 
Finally, once $K'$ is found, the computation of $M,N$ is easily completed. \\

In the sequel we will use differential modules because these are more natural for the problem. A translation
in terms of differential operators is presented at the moment that actual algorithms are involved since the
latter are frequently phrased in terms of differential operators.

The following notation is used. The trivial 1-dimensional differential module over a
field $K$ is $Ke$ with $\partial e=0$. This module will be denoted by ${\bf 1}$
or ${\bf 1}_K$.  For a differential module $A$ over $K$ of dimension $a$ one writes $\det A$
for the 1-dimensional module $\Lambda ^aA$.

\section{A version of Katz' theorem}

In the proof we will use the following notion of {\it twist of a differential module}.\\
Let $Gal(K/k)$ be the Galois group of any Galois extension $K$ of $k$ (finite or infinite). Let $A$ be a differential module over $K$ and $\sigma \in Gal(K/k)$. The twist $^\sigma A$ is equal to $A$ as additive group, has the same 
$\partial$ as $A$, but its structure as $K$-vector space is given by 
$\lambda *a:=(\sigma ^{-1}\lambda )\cdot a$ for $\lambda \in K,\ a\in A$.

The elements $\sigma \in Gal(K/k)$ act in a natural way on $K[\partial ]$ by the
formula $\sigma (\sum _n a_n\partial ^n)=\sum _n\sigma (a_n )\partial ^n$.
If one presents $A$ as $K[\partial ]/K[\partial ]F$ (with $F$ monic), then 
$^\sigma A=K[\partial ]/K[\partial ]\sigma (F)$.

An isomorphism  $\phi (\sigma ):\ ^\sigma A\rightarrow A$ can also be interpreted as a $C$-linear bijection $\Phi (\sigma ):A\rightarrow A$, commuting with $\partial$ and such that $\Phi(\sigma )(\lambda a)=\sigma (\lambda )\cdot \Phi (a)$.  In other words $\Phi(\sigma )$ is $\sigma$-linear isomorphism.

\begin{proposition} Let $L$ be a differential module over $k$. Suppose that:\\
{\rm (1)}. The field of constants $C$ of $k$ is algebraically closed, $k$ has characteristic zero and $k$ is a $C_1$-field. \\
{\rm (2)}. $L$ is irreducible and $\overline{L}:=\overline{k}\otimes _kL$ decomposes
as a direct sum $\oplus _{i=1}^s A_i $ of isomorphic irreducible differential modules over
$\overline{k}$.

Then there are modules $M,N$ over $k$ such that $L\cong M\otimes _kN$, $M$ is absolutely irreducible
and the irreducible module $N$ has a finite differential Galois group.
The pair $(M,N)$ is unique up to a change $(M\otimes _kD,N\otimes _k D^{-1})$ where $D$ has dimension 1
and $D^{\otimes t}$ is the trivial module for some $t\geq 1$.
\end{proposition}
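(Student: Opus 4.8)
The plan is to work with differential modules and to reconstruct $N$ as a module whose underlying data is a torsor trivialised by descent from $\overline{k}$. First I would observe that since $\overline{L}=\oplus_{i=1}^s A_i$ with all $A_i$ isomorphic and irreducible, the module $A:=A_1$ is absolutely irreducible and descends to a finite Galois extension $K/k$; enlarging $K$ if necessary we may assume all the $A_i$ are defined over $K$ and mutually isomorphic over $K$. For each $\sigma\in \mathrm{Gal}(K/k)$ the twist $^\sigma A$ is again one of the summands of $\overline{L}$ (because $\overline{L}$ is defined over $k$, hence $\mathrm{Gal}$-stable, and its isotypical component is unique), so $^\sigma A\cong A$ over $\overline{k}$, hence over $K$ after a further finite enlargement. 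Fix such isomorphisms $\phi(\sigma):\,^\sigma A\to A$, equivalently $\sigma$-linear automorphisms $\Phi(\sigma)$ of $A$ commuting with $\partial$.

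Next I would extract the cocycle. The composite $\Phi(\sigma)\Phi(\tau)\Phi(\sigma\tau)^{-1}$ is a $K$-linear automorphism of the absolutely irreducible module $A$ commuting with $\partial$, hence by the differential Schur lemma it is a scalar $c(\sigma,\tau)\in C^*$. One checks $c$ is a $2$-cocycle on $\mathrm{Gal}(K/k)$ with values in $C^*$ (trivial action), and its class in $H^2(\mathrm{Gal}(K/k),C^*)$ is the obstruction to descending $A$ to $k$; changing the $\Phi(\sigma)$ by scalars changes $c$ by a coboundary. Now I invoke the $C_1$-hypothesis: the Brauer group of $k$ is trivial (Tsen-type theorem), so the class of $c$ dies in $H^2(\mathrm{Gal}(\overline{k}/k),\overline{k}^*)$; but since $c$ takes values in $C^*$ and $C$ is algebraically closed, the relevant statement is that $c$ becomes a coboundary after passing to a finite (cyclic) extension $K'\supset K$ — here one uses that over a $C_1$-field every such scalar $2$-cocycle is split by a computable cyclic extension, which is exactly the descent step flagged in the introduction. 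Having trivialised $c$, I can rescale to get genuine $\sigma$-linear automorphisms $\Psi(\sigma)$ of $A':=K'\otimes A$ with $\Psi(\sigma)\Psi(\tau)=\Psi(\sigma\tau)$; by Galois descent for differential modules this gives a module $M$ over $k$ with $K'\otimes_k M\cong A'$, and $M$ is absolutely irreducible since $A$ is.

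It then remains to produce $N$ and the decomposition. I would consider $\mathrm{Hom}(\overline{M},\overline{L})=\overline{M}^\vee\otimes\overline{L}$ over $\overline{k}$: since $\overline{L}\cong A^{\oplus s}$ and $\overline{M}\cong A$, this Hom-module is isomorphic to $\overline{\mathbf 1}^{\oplus s}$, i.e. it is a trivial differential module of dimension $s$. Set $N:=\mathrm{Hom}_{k[\partial]}$-internalised as the module $M^\vee\otimes_k L$ over $k$; then $\overline{N}=\overline{M}^\vee\otimes\overline{L}$ is trivial over $\overline{k}$, so $N$ has all solutions algebraic over $k$, i.e. finite differential Galois group. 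The natural evaluation map $M\otimes_k N=M\otimes_k M^\vee\otimes_k L\to L$ is a morphism of differential modules over $k$; after base change to $\overline{k}$ it becomes the evaluation $A\otimes A^\vee\otimes A^{\oplus s}\to A^{\oplus s}$, which is surjective, and a dimension count ($\dim M\cdot\dim N=\dim M\cdot s=\dim L$) shows it is an isomorphism. Since $L$ is irreducible over $k$ and $L\cong M\otimes_k N$, irreducibility of $M$ forces $N$ to be irreducible as well. For uniqueness, given another pair $(M',N')$ one gets $\overline{M}\cong\overline{M'}$ (both are the isotypical summand $A$ up to the ambiguity of which $A_i$), so $D:=\overline{M}^\vee\otimes\overline{M'}$ is $1$-dimensional and descends to a $1$-dimensional module $D$ over $k$ with $M'\cong M\otimes_k D$ and $N'\cong N\otimes_k D^{-1}$; since $N$ and $N'$ both have finite differential Galois group, so does $D$, whence $D^{\otimes t}\cong\mathbf 1$ for some $t\geq 1$.

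The main obstacle is the descent step: making precise, and effective, the claim that the scalar $2$-cocycle $c$ on $\mathrm{Gal}(K/k)$ is split by a finite cyclic extension of $k$ when $k$ is a $C_1$-field, and identifying that extension $K'$ explicitly. This is where the $C_1$-property is genuinely used (via triviality of the Brauer group of $k$ and the interpretation of $c$ as a crossed-product algebra class), and it is the part that requires the detailed construction with first-order operators alluded to in the introduction; the rest of the argument is formal Galois descent for differential modules together with the differential Schur lemma.
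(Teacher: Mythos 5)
Your first half—realizing $A_1$ as a twist-stable summand, extracting the $C^*$-valued $2$-cocycle $c$ from the $\sigma$-linear maps $\Phi(\sigma)$ via the differential Schur lemma, killing it using the $C_1$-property, and descending $A_1$ to an absolutely irreducible $M$ over $k$—is essentially the paper's argument. (The paper makes the cocycle step slightly cleaner for the \emph{existence} statement by working with the full group $Gal(\overline{k}/k)$, whose cohomological dimension is $\leq 1$ for a $C_1$-field, so that $H^2(Gal(\overline{k}/k),C^*)=H^2(Gal(\overline{k}/k),\mu_\infty)=1$ outright; no finite splitting field and no Brauer-group detour are needed at this theoretical level. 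The finite cyclic $K'$ only matters for the algorithm in Section 3.)

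There is, however, a genuine error in your construction of $N$: you conflate the internal Hom differential module ${\rm Hom}(\overline{M},\overline{L})$, of dimension $(\dim M)^2\cdot s$, with the $C$-vector space of horizontal morphisms ${\rm Hom}_\partial(\overline{M},\overline{L})=\ker(\partial,{\rm Hom}(\overline{M},\overline{L}))\cong C^s$. The claim that $\overline{M}^{\vee}\otimes\overline{L}\cong {\bf 1}^{\oplus s}$ is false unless $\dim M=1$: one has ${\rm Hom}(A,A^{\oplus s})\cong{\rm Hom}(A,A)^{\oplus s}$, and ${\rm Hom}(A,A)$ is not a trivial module for $\dim A>1$ (by the paper's Lemma 3.5 it contains exactly one trivial line). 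Consequently your $N:=M^{\vee}\otimes_k L$ has dimension $(\dim M)^2 s$, the dimension count $\dim M\cdot\dim N=\dim L$ fails, and the evaluation map $M\otimes_k M^{\vee}\otimes_k L\to L$ is a surjection with large kernel, not an isomorphism. The paper's fix is to observe that $\overline{k}\otimes_C{\rm Hom}_\partial(\overline{M},\overline{L})$ is a $Gal$-stable trivial \emph{submodule} of ${\rm Hom}(\overline{M},\overline{L})$ and to set $N:=(\overline{k}\otimes_C{\rm Hom}_\partial(\overline{M},\overline{L}))^{Gal}$, an $s$-dimensional submodule of ${\rm Hom}(M,L)$ with finite differential Galois group; the evaluation map restricted to $M\otimes_k N$ is then an isomorphism because it is one after $\otimes\,\overline{k}$. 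The same confusion undermines your uniqueness step: $\overline{M}^{\vee}\otimes\overline{M'}$ is $(\dim M)^2$-dimensional, not a line; one must take its unique $1$-dimensional submodule, or, as the paper does, parametrize the ambiguity in the descent data by continuous characters $h:Gal\to C^*$, whose image is some $\mu_t$, yielding $D$ with $D^{\otimes t}={\bf 1}$.
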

\begin{proof} Write $A=A_1$ and let $Gal$ denote the Galois group of $\overline{k}/k$. For any 
$\sigma \in Gal$, the twisted module $^\sigma A $ is a submodule of $^\sigma (\overline{k}\otimes _kL)$.
As the latter module is isomorphic to $\overline{k}\otimes _kL$, there is a
$\sigma$-linear  isomorphism $\Phi (\sigma ):  A \rightarrow A$. 

This induces a 2-cocycle $c$ for $Gal$ with values in $C^*$, defined
by $\Phi (\sigma \tau )=c(\sigma ,\tau )\cdot \Phi (\sigma)\cdot \Phi(\tau )$.  Since $k$ is a $C_1$-field, the 2-cocycle is trivial (\cite{Se}, II-9, \S 3.2).
After multiplying the isomorphisms  $\{\Phi (\sigma )\}$ by suitable elements in $C^*$, one obtains  descent data $\{\Phi (\sigma )|\ \sigma \in Gal\}$ satisfying the descent condition  $\Phi (\sigma \tau )=\Phi (\sigma)\cdot \Phi(\tau )$ for all $\sigma ,\tau \in Gal(\overline{k}/k)$.  Define $M:=\{a\in A\ |\  \Phi (\sigma ) a=a\mbox{ for all }\sigma \in Gal\}$. It is easily verified that $M$ is a differential module over $k$ and that the canonical morphism $\overline{M}:=\overline{k}\otimes _kM\rightarrow A$ is an isomorphism.

Consider now ${\rm Hom}_\partial (\overline{M},\overline{L})$. This is a vector
space, isomorphic to $C^s$ and provided with an action of $Gal$. Then 
$\overline{k}\otimes _C{\rm Hom}_\partial (\overline{M},\overline{L})$ is 
a trivial differential module over $\overline{k}$ provided with an action of $Gal$. It is a submodule of the differential module   ${\rm Hom}(\overline{M},\overline{L})$. Taking invariants  under $Gal$ one obtains a differential module 
\[N:=(\overline{k}\otimes _C{\rm Hom}_\partial (\overline{M},\overline{L}))^{Gal}
\mbox{ over }k \mbox{ which is a submodule  of } {\rm Hom}(M,L).\] 
The canonical morphism
$\overline{k}\otimes N\rightarrow  \overline{k}\otimes _C{\rm Hom}_\partial (\overline{M},\overline{L})$ is an isomorphism and thus the differential Galois group of $N$ is finite.

The canonical morphism of differential modules $M\otimes _k {\rm Hom}(M,L)\rightarrow L$, namely $m\otimes \ell \mapsto \ell (m)$, can be restricted to a morphism $ f: M\otimes _kN\rightarrow L$. By construction the induced morphism
 $\overline{M}\otimes _{\overline{k}}\overline{N}\rightarrow \overline{L}$ is an isomorphism and thus so is $f$. \\

The descent data  $\{ \Phi (\sigma )|\ \sigma \in Gal \}$ for $A$ are not unique. They can be changed into $\{h(\sigma ) \Phi (\sigma )|\ \sigma \in Gal\}$ where $h:Gal\rightarrow C^*$ is any continuous homomorphism. We note that the image of $h$ is a subgroup $\mu _t$ of the $t$-th roots of unity for some $t$. Consider the trivial differential module $\overline{k}e$ with $\partial e=0$ and with $Gal$ action given by $\sigma e=h(\sigma )e$ for all $\sigma \in Gal$. By taking the invariants under $Gal$ one obtains a 1-dimensional module $D$ over $k$ such that the canonical morphism $\overline{k}\otimes D\rightarrow \overline{k}e$ is an isomorphism and respects the actions of $Gal$.  Further $D^{\otimes t}$ is the trivial differential module ${\bf 1}$. 

The differential module obtained with the new descent data can be seen to be  $M\otimes _kD$ and thus $N$ will be changed into  $N\otimes D^{-1}$. From this observation the last statement of the proposition follows.  \end{proof} 

\begin{remarks} $\ ${\rm 
The non unicity of the pair $(M,N)$ can be restricted by the condition that $\det N={\bf 1}$. Then only a change $(M\otimes D,N\otimes D^{-1})$ is possible with $D^{\otimes s}={\bf 1}$.
}\end{remarks}
\section{Algorithmic approach}

First we make the  relation between differential modules and differential operators explicit. Let $A$ be a differential module and $a\in A$ a cyclic vector. One associates
to this the monic differential operator $op(A,a)\in k[\partial ]$ of minimal degree satisfying
$op(A,a)a=0$. The morphism $k[\partial ]\rightarrow A$, which maps $1$ to $a$,
induces an isomorphism $k[\partial ]/k[\partial ]op(A,a)\rightarrow A$.

Let $B\subset A$ be a submodule. This yields a factorization 
$op(A,a)=\mathcal{L}\mathcal{R}$ with $\mathcal{R}\in k[\partial ]$ is the monic operator of minimal degree such that $\mathcal{R}a\in B$. One observes that 
$\mathcal{R}=op(A/B,a+B)$.

Further $\mathcal{L}\in k[\partial ]$ is the operator of minimal degree satisfying 
$\mathcal{L}b=0$, where $b:=\mathcal{R}a$.  Clearly $b$ is a cyclic vector for $B$ and $\mathcal{L}=op(B,b)$.

Moreover, any factorization $op(A,a)=\mathcal{L}\mathcal{R}$ with monic $\mathcal{L},\mathcal{R}$ corresponds in this way to a unique submodule $B\subset A$,
namely $B=k[\partial ]\mathcal{R}a$.

Let $k'$ be an algebraic extension of $k$. Then the above bijection extends to a bijection
between the (monic) factorizations of $op(A,a)$ in $k'[\partial ]$ and the submodules of
$k'\otimes _kA$.\\

\noindent 
{\it As before,  $L$ denotes an irreducible differential module over $k$ such that $\overline{k}\otimes L$ is a direct sum of $s>1$ copies of an absolutely irreducible differential module}.\\

Choose $\ell \in L, \ell \neq 0$. Since $L$ is irreducible, $\ell$ is a cyclic vector. {\it We  assume the
knowledge of a factorization $op(L,\ell )=F.R$ with monic $F,R\in \overline{k}[\partial ]$ and $F$ absolutely irreducible}, given by \cite{C-W}.
Using this information we will describe the computation leading to a tensor product decomposition $L=M\otimes N$. \\

\subsection{The special case $\dim M=1$}
Assume that the irreducible $L$ is equal to $M\otimes _kN$ with 
$\dim M=1,\ \ \dim N=s>1$ and  $\overline{k}\otimes _kN$ is trivial.
Thus the Picard-Vessiot extension $K^+$ of $N$ is a finite extension of $k$ and
can be considered as a subfield of $\overline{k}$. The (covariant) solution space $V$ of
$N$ is equal to $\ker (\partial ,K^+\otimes _kN)$. The differential Galois group 
$G^+=Gal(K^+/k)$ acts on $V$ and there is a canonical isomorphism 
$K^+\otimes _CV\rightarrow K^+\otimes _kN$. Moreover, $\overline{M}:=\overline{k}\otimes _kM$ is not a trivial module (equivalently, the differential Galois group of $M$ is infinite and then equal to the multiplicative group $\mathbb{G}_m$).

There is a trivial way to produce a decomposition
$L=M\otimes _kN$. Indeed, write $op(L,\ell)=(\partial ^s+a_{s-1}\partial ^{s-1}+\cdots +a_0)$. Then the tensor product decomposition
$op(L,\ell )=(\partial +\frac{a_{s-1}}{s})\otimes (\partial ^{s}+b_{s-2}\partial ^{s-2}+\cdots +b_0)$,  for suitable elements $b_i\in k$,
solves already the problem, since (as one easily sees) all the solutions of $\partial ^{s-1}+b_{s-2}\partial ^{s-2}+\cdots +b_0$ are in $\overline{k}$. 
However, the aim of this subsection is to describe in this easy situation an algorithm for obtaining  the pair $(M,N)$,  up to a change $(M\otimes D, D^{-1}\otimes N)$, which 
works with small modifications for the general case. \\

After fixing a non zero element $\ell \in L$, the module is represented by the monic operator $op(L,\ell )$. {\it The first step} is to produce the smallest subfield $K\subset \overline{k}$ (containing $k$) such that $op(L,\ell )$ decomposes in $K[\partial ]$ as a product $F_1\cdots F_s$ of (monic) {\it equivalent} operators of degree 1.\\

The (monic) left hand factors $F=\partial +u\in \overline{k}[\partial ]$ of $op(L,\ell )$ 
correspond to the 1-dimensional submodules of 
\[\overline{k}\otimes L=M\otimes _k(\overline{k}\otimes _kN)=M\otimes _k
(\overline{k}\otimes _CV)\]
and these are the $M\otimes _k(\overline{k}\otimes _CW)$ where $W$ runs in the
set of the 1-dimensional  subspaces of $V$. The same can be done with $\overline{k}$
replaced by $K^+$. Therefore $u\in K^+$ and $K_0:=k(u)\subset K^+$. More precisely,
let $St(W)\subset G^+$ be the stabilizer of $W$. This is the subgroup of $G^+$ leaving
$F$ invariant and thus  $K_0= (K^+)^{St(W)}\subset K^+$.\\

{\it  Now we suppose that a (monic) left hand factor $F=\partial +u$
of $op(L,\ell )$ is known and explain how to obtain $K$ from this}.
 Let $K_1\subset \overline{k}$ be the normal closure of $K_0$. Then
$K_1\otimes _kL$ contains a 1-dimensional submodule that we will call again
$D$. The submodule $\sum _{\sigma \in Gal(K_1/k)}\sigma (D)$ of
$K_1\otimes _kL$ is invariant under the action of $Gal(K_1/k)$. Since $L$
is irreducible,  $\sum _{\sigma \in Gal(K_1/k)}\sigma (D)=K_1\otimes_k L$
and it follows that $K_1\otimes_k L$ is a direct sum of 1-dimensional submodules.
As a consequence, $op(L,\ell )$ factors as $F_1\cdots F_s$ in $K_1[\partial ]$.\\

A priori, the factors $F_i=\partial +u_i\in K_1[\partial ]$ need not be equivalent.
For $i<j$ we consider a non zero element $f_{ij}\in \overline{k}$ satisfying
$\frac{f_{ij}'}{f_{ij}}=u_i-u_j$. Put $K=K_1(\{f_{ij}\})$. Then $K\supset k$ is the smallest field such that $op(L,\ell )$ factors as $F_1\cdots F_s\in K[\partial ]$
where the monic degree one factors $F_i$ are equivalent.\\

Clearly $K\subset K^+$.  From the condition that  $K$ is minimal such that
$K\otimes _kN$ is a direct sum of isomorphic 1-dimensional submodules and
the irreduciblity of $N$, it follows easily that the center $Z$ of 
$G^+\subset {\rm GL}(V)$ is the finite cyclic group 
$(C^*\cdot id_V)\cap G^+$ and that $K=(K^+)^Z$.

\begin{remarks}{\rm (1) The field $K$ and the above algorithm for $K$ do not change
if $L$ is replaced by $D\otimes _k L$, where $D$ is a 1-dimensional module
satisfying $D^{\otimes t}={\bf 1}$ for some $t\geq 1$.\\ 
(2) The fields $K_0$ and $K_1$ depend on the given left hand factor of degree
1 of $op(L,\ell )$. We illustrate this by an example where the differential
Galois group $G^+\subset {\rm GL}(C^3)$ is generated by the matrices
$\left(\begin{array}{ccc} 0&0&1\\ 1&0&0\\ 0&1&0\end{array}\right)$ and 
$\left(\begin{array}{ccc} a&0&0\\ 0&b&0\\ 0&0&c\end{array}\right)$ with
$a^6=b^6=c^6=1$. If this left hand factor corresponds to $Ce_1$ (or 
$Ce_2$ or $Ce_3$), then its stabilizer is  the subgroup of the diagonal  
matrices in $G^+$. Otherwise it is just the center $Z$. In the first case
$K_1\neq K$ and in the last one $K_1=K$. }\hfill $\square$ \end{remarks}

\subsubsection{ The 2-cocycle $c$ and descent fields}
 Now we have arrived at the situation where $K\otimes _kL$ is a direct sum of $s$ copies of a  known 1-dimensional differential module $D=M\otimes _kE$ over $K$, where $E$ is an, a priori, unknown  1-dimensional submodule of $K\otimes _kN$. We want to produce a field extension $K'\supset K$ such that $K'\otimes _KD$ descends to $k$.

 Let $D$ correspond to the operator $\partial -u$. Then, for $\sigma \in Gal(K/k)$,  the operator $\partial -\sigma (u)$ corresponds to  $^\sigma D$. There are
elements $f_\sigma \in K^*$ such that $\sigma (u)-u=\frac{f_\sigma '}{f_\sigma}$.
One obtains a 2-cocycle $c$ for $Gal(K/k)$ with values in  $C^*$ by
$f_{\sigma \tau}=c(\sigma ,\tau )\cdot f_\sigma \cdot \ ^\sigma f_\tau$ for
all $\sigma ,\tau \in Gal(K/k)$. The class of the 2-cocycle $c$ in $H^2(Gal(K/k),C^*)$
 is the {\it obstruction for the descent} of  $D$ (or, equivalently, for the descent of  $E$).

  Indeed, if $D$ descends to $k$, then one can represent $D$ by $\partial -u$ with $u\in k$.
 On the other hand, if the class of $c$ is trivial, then after changing the $\{f_\sigma \}$
 one has $f_{\sigma \tau}= f_\sigma \cdot \ ^\sigma f_\tau$. By Hilbert 90, there exists
 $F\in K^*$ with $f_\sigma =\frac{\sigma F}{F}$ for all $\sigma \in Gal(K/k)$. Then
 $\partial -u$ is equivalent to $\partial -u+\frac{F'}{F}$. Further $u-\frac{F'}{F}$ lies in $k$,
 since it is invariant under $Gal(K/k)$.  

 \bigskip

 The exact sequence 
 $1\rightarrow Z\rightarrow G^+\stackrel{pr}{\rightarrow} Gal(K/k)\rightarrow 1$ 
 induces a 2-cocycle with values in $Z$, in the following way. Let
  $\phi :Gal(K/k)\rightarrow G^+$ 
 be a section, i.e., $pr\circ \phi (g)=g$ for all $g\in Gal(K/k)$. Then $d$, defined by
 $\phi (g_1g_2)=d(g_1,g_2)\phi (g_1)\phi (g_2)$, is a 2-cocycle with values in $Z$.
 The class of $d$ in $H^2(Gal(K/k),Z)$  is independent of the choice of the section $\phi$.
 As before, $Z$ is identified with a subgroup of $C^*$. Thus $d$ induces an element of
 $H^2(Gal(K/k),C^*)$. We note that the homomorphism $H^2(Gal(K/k),Z)\rightarrow
 H^2(Gal(K/k),C^*)$ is, in general, not injective. 

 \begin{lemma}  The cocycles $c$ and $d$ have the same image in $H^2(Gal(K/k),C^*)$.
 This image does not depend of the choice of $N$.  Let $s=\dim N$. Then the image of $c^s$ in $H^2(Gal(K/k),C^*)$  is trivial.
 \end{lemma}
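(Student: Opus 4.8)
The plan is to prove the three assertions of the lemma in sequence, all of which hinge on relating the cocycle $c$ (built from the first-order module $D = M \otimes_k E$) to the purely group-theoretic cocycle $d$ coming from the extension $1 \to Z \to G^+ \to \mathrm{Gal}(K/k) \to 1$.

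First I would establish that $c$ and $d$ have the same class in $H^2(\mathrm{Gal}(K/k), C^*)$. The key observation is that the one-dimensional module $E \subset K \otimes_k N$ has trivial differential Galois group after base change to $K^+$: its solution space is a line $W \subset V$ on which $Z$ acts by a character, and the stabilizer computation from section 3.1 identifies $K = (K^+)^Z$. Concretely, pick a solution $w \in V$ of $N$ spanning $W$; then $\sigma(u) - u = f_\sigma'/f_\sigma$ where $f_\sigma$ measures how the chosen section $\phi(\sigma) \in G^+$ moves $w$, i.e. $\phi(\sigma)$ acts on the $E$-component as multiplication by $f_\sigma$ (up to the $\mathbb{G}_m$-part coming from $M$, which cancels since $M$ is defined over $k$ and contributes the same twist to every $\sigma(u)$). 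Then the cocycle identity $\phi(\sigma\tau) = d(\sigma,\tau)\phi(\sigma)\phi(\tau)$, evaluated on $w$, gives exactly $f_{\sigma\tau} = d(\sigma,\tau)\, f_\sigma \cdot {}^\sigma f_\tau$, so $c = d$ on the nose (for this choice of section and this choice of $w$), hence they agree in cohomology. I expect this identification to be the main obstacle, because it requires being careful that the $\mathbb{G}_m$-ambiguity from $M$ and the choices of section and solution vector all wash out correctly; the honest way is to phrase it as: $D = M \otimes_k E$, the obstruction to descending $D$ equals the obstruction to descending $E$ (stated already in the text, since $M$ is over $k$), and the obstruction to descending $E$ is computed by $Z$-valued data which is precisely $d$.

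Second, independence of the choice of $N$: if $N'$ is another valid choice, then by the unicity part of the Proposition, $N' = N \otimes_k D_0^{-1}$ with $D_0$ one-dimensional and $D_0^{\otimes t} = \mathbf{1}$, and correspondingly $M' = M \otimes_k D_0$. Since $D_0$ is already defined over $k$, it has no effect on the obstruction to descent: the one-dimensional module $D = M \otimes_k E$ attached to $L$ is intrinsic (it is the unique one-dimensional submodule of $K \otimes_k L$ up to the $Z$-action, determined by $L$ alone), so $c$ depends only on $L$ and $K$, not on the factorization. Equivalently, via the first part, $d$ depends only on the extension $1 \to Z \to G^+ \to \mathrm{Gal}(K/k) \to 1$, which is determined by $L$.

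Third, triviality of $c^s$: here I would use that $Z$, being the finite cyclic center $(C^* \cdot \mathrm{id}_V) \cap G^+$ of $G^+ \subset \mathrm{GL}(V)$ with $\dim V = s$, consists of scalar matrices, so $\det: Z \to C^*$ sends a generator $\zeta \cdot \mathrm{id}_V$ to $\zeta^s$; that is, the composite $Z \hookrightarrow C^* \xrightarrow{(\ )^s} C^*$ equals $\det|_Z$. Applying the induced map on $H^2(\mathrm{Gal}(K/k), -)$, the class of $c^s$ in $H^2(\mathrm{Gal}(K/k), C^*)$ is the image of $d$ under $Z \xrightarrow{\det} C^*$. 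But $\det \circ \phi : \mathrm{Gal}(K/k) \to C^*$ provides a $1$-cochain whose coboundary is exactly $\det_*(d)$ (apply $\det$ to $\phi(\sigma\tau) = d(\sigma,\tau)\phi(\sigma)\phi(\tau)$ and use that $\det d(\sigma,\tau)$ lands in $C^*$, on which $\mathrm{Gal}(K/k)$ acts trivially), so $\det_*(d)$ is a coboundary and $c^s$ is trivial. This completes the proof.
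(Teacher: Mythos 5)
Your proposal is correct, and for the first two assertions it follows the same route as the paper: one writes $u=\frac{F'}{F}$ with $F\in (K^+)^*$, sets $f_\sigma=\frac{\phi(\sigma)F}{F}$ (checking $Z$-invariance so that $f_\sigma\in K^*$), and reads off $c=d$ directly from $\phi(\sigma\tau)=d(\sigma,\tau)\phi(\sigma)\phi(\tau)$; and one observes that changing $N$ by a one-dimensional module defined over $k$ shifts $u$ by an element of $k$, leaving $\sigma(u)-u$ and hence $c$ untouched. Your phrasing of the first step (``evaluate the cocycle identity on $w$'') is looser than the paper's computation with the function $F$ --- the section $\phi(\sigma)$ need not preserve the line $W$ --- but your fallback formulation (obstruction of $D$ $=$ obstruction of $E$ $=$ the $Z$-valued class $d$) is exactly the paper's argument. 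Where you genuinely diverge is the triviality of $c^s$: the paper first normalizes $\det N={\bf 1}$ (legitimate by the independence just proved), so that $G^+\subset {\rm SL}(V)$, the center $Z$ lies in $\mu_s$, and $d^s$ is literally the constant cocycle $1$; you instead note that $c^s=\det{}_*(d)$ and that applying $\det$ to $\phi(\sigma\tau)=d(\sigma,\tau)\phi(\sigma)\phi(\tau)$ exhibits $\det{}_*(d)$ as the coboundary of the $C^*$-valued $1$-cochain $\sigma\mapsto\det\phi(\sigma)$ (on which the Galois action is trivial). Both are valid; your version avoids the normalization of $\det N$ and makes the coboundary explicit, while the paper's is a one-line consequence of $Z\subset\mu_s$ once the normalization is in place.
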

  \begin{proof}
   Let $D$, as before, be given by $\partial -u$ with $u\in K$.   Write $u=\frac{F'}{F}$ with 
   $F\in (K^+)^*$. Let $\phi :G\rightarrow G^+$ be a section. Then 
$\sigma (u)-u=\frac{\phi (\sigma)F'}{\phi (\sigma) F}-\frac{F'}{F}=\frac{f_\sigma '}{f_\sigma}$
where $f_\sigma :=\frac{\phi (\sigma )F}{F}$. One easily sees that $f_\sigma $ is invariant
under $Z$ and thus $f_\sigma \in K^*$.
The equality $f_{\sigma \tau }=c(\sigma ,\tau )f_\sigma \ ^\sigma f_\tau$ implies
$\phi(\sigma \tau )F=c(\sigma ,\tau )\phi (\sigma) \phi (\tau)F$. Hence 
$d(\sigma ,\tau)=c(\sigma ,\tau)$. 

Replacing $N$ by $(\partial -v)\otimes _kN$ with $v\in k$, induces the change 
of $\partial -u$ into $\partial -u-v$. Since $\sigma (u+v)-(u+v)=\sigma (u)-u$, the element 
$c$ and its image in $H^2(Gal(K/k),C^*)$ are unchanged.

Suppose that $N$ is chosen such that $\det N={\bf 1}$. The  cocycle $d$ has values
in $\mu _s$, since $\dim N=s$. Thus the image of $d^s$ in $H^2(Gal(K/k),C^*)$ is 
trivial and the same holds for $c^s$.    \end{proof}

\begin{definition} {\rm  Let $(K,c)$ be a Galois extension $K/k$ and
$c$  a 2-cocycle for $Gal(K/k)$ with values in $C^*$ and such that $c^s$ is a trivial
2-cocycle.  A {\it descent field} for $(K,c)$
is a Galois extension $K'\supset k$ containing $K$, such that the induced 2-cocycle 
$c'$ for $Gal(K'/k)$, defined by $c'(g_1,g_2)=c(pr g_1,pr g_2)$, yields a trivial element
in $H^2(Gal(K'/k),C^*)$. Here $pr: Gal(K'/k)\rightarrow Gal(K/k)$ denotes the natural map. 
}\hfill $\square$ \end{definition}

The assumption that $k$ is a $C_1$-field implies the {\it existence of a descent field for every
pair $(K,c)$}. Indeed, by \cite{Se}, II \S 3, the cohomological dimension
of $Gal(\overline{k}/k)$ is 1 and therefore $H^2(Gal(\overline{k}/k),\mu_\infty )=1$,
where $\mu_\infty$ denotes the torsion subgroup of $C^*$.
 One has $H^2(Gal(K/k),C^*)=H^2(Gal(K/k),\mu _\infty)$ and
$\underset{\rightarrow }{\lim}\ H^2(Gal(K'/k),\mu_\infty )=H^2(Gal(\overline{k}/k),\mu_\infty )=\{1\}$,
where the direct limit is taken over all Galois extensions $K'\supset k$, containing $K$.   
Hence for every class $\overline{c}\in H^2(Gal(K/k),C^*)$ there exists a 
Galois extension $K'\supset k$, containing $K$, such that the image of $\overline{c}$
in $H^2(Gal(K'/k),C^*)$ is $1$.\\
 {\it Our contribution is now to produce a descent field for $(K,c)$ by some algorithm}.

\subsubsection{A decision procedure constructing a descent field for $(K,c)$}

Since $k$ is a $C_1$-field,  $H^2(Gal(K/k),K^*)$ is trivial (\cite{Se}, II-9, \S 3.2) and this implies the existence of elements 
$\{f_\sigma |\ \sigma \in Gal(K/k)\}\subset K^*$ satisfying $f_{\sigma \tau}=c(\sigma ,\tau )\cdot f_\sigma \cdot \ ^\sigma f_\tau$ for all $\sigma ,\tau \in Gal(K/k)$.

{\it Suppose that $c$ is given in the form
$f_{\sigma \tau }=c(\sigma ,\tau )f_\sigma \ ^\sigma f_\tau$ with $\{f_\sigma \}\subset K^*$}.
(This is trivially true for the present case $\dim M=1$. 
For the general case, Remarks 3.4 part (4) describes a decision procedure 
producing suitable $\{f_\sigma \}$, a key step in the construction). 
{\it Then the following algorithm produces a descent field}. \\

One has
$\frac{f_{\sigma \tau}'}{f_{\sigma \tau }}=\frac{f_\sigma '}{f_\sigma }+
\sigma (\frac{f_\tau '}{f_\tau})$ and since $H^1(Gal(K/k),K)=0$  there is
an element $v\in K$ such that $\frac{f_\sigma '}{f_\sigma }=\sigma (v)-v$
for all $\sigma \in Gal(K/k)$;  explicitly 
\[v=\frac{-1}{[K:k]}\sum _{\tau \in Gal(K/k)}  \frac{f_\tau '}{f_\tau}.\]

One observes that $c$ is the obstruction for descent of the operator $\partial -v$.\\ 
Further $-mv=\frac{G'}{G}$ with $G=\prod _{\tau \in Gal(K/k)}f_\tau$ and $m=[K:k]$. Hence the field $K(\sqrt[ m ]{G})$ contains the Picard-Vessiot field of $\partial -v$\
and is  a descent field.

A less brutal way to compute a descent field is as follows.
Since the cocycle $c^s$ is trivial, there are computable elements 
$\{d(\sigma )|\ \sigma \in Gal(K/k)\}\subset C^*$
satisfying $d(\sigma \tau )=c(\sigma ,\tau )^sd(\sigma )d(\tau )$. The elements
$\{\frac{f_\sigma ^s}{d(\sigma )}\}$ form a 1-cocycle. Since $H^1(Gal(K/k),K^*)=\{1\}$,
one can effectively compute $F\in K^*$ such that 
$\frac{f_\sigma ^s}{d(\sigma )}=\frac{\sigma F}{F}$ for all $\sigma \in Gal(K/k)$
(see \cite{Se2}, Chapitre X, \S 1, Prop. 2). 
One observes that $v-\frac{1}{s}\frac{F'}{F}\in k$ since it is invariant under $Gal(K/k)$. 
The field extension $K'=K(\sqrt[s]{F})$  has the property that 
$(\partial -v)$ is equivalent to $\partial -v+\frac{1}{s}\frac{F'}{F}$ over $K'$.
Hence $K'$ is a descent field. \hfill $\square$ \\

\begin{remarks}{\rm $\ $\\
(1)  We note that the above algorithm proves, by considering 1-dimensional differential modules over $K$, the existence of a descent field only using that  $H^2(Gal(K/k),K^*)=\{1\}$ (see Lemma 4.1 for the general statement).\\
(2). Instead of assuming that the 2-cocycle $c^s$ is trivial, we
may consider a class $\overline{c}\in H^2(Gal(K/k),\mu _\infty )$, where 
$\mu_\infty \subset C^*$ denotes, as before, the group of the roots of unity. Any finite group $G$ occurs as
some $Gal(K/k)$. Therefore the group $H^2(Gal(K/k),\mu _\infty)$ is in general not
trivial and the descent problem, i.e., finding an extension $K'\supset K$ such that
the image of $\overline{c}$ in $H^2(Gal(K'/k),\mu _\infty )$ is 1, is non trivial.
However for a cyclic $Gal(K/k)$ one has $H^2(Gal(K/k),\mu _\infty )=\{1\}$
(\cite{Se2}, VIII, \S 4). In particular, non trivial examples for the descent problem tend to be complicated.\\
(3). We ignore how to compute or characterize all minimal descent fields
for a given pair $(K,c)$.\\
(4).   Computing  elements $f_\sigma \in K^*$ satisfying 
$f_{\sigma \tau}=c(\sigma ,\tau )\cdot f_\sigma \cdot \ ^\sigma f_\tau$ appears to be far from trivial. 
A possible 
method, which uses explicitly 
the $C_1$-property of $k$, is the following. 
Starting with the 2-cocyle $c$, there is a well known construction (see \cite{G-S}) of an algebra
$A=\oplus _{\sigma \in G}K[\sigma ]$, where $G=Gal(K/k)$, of dimension 
$m=\#G=[K:k]$ over $K$, defined by the rules:
\[ [\sigma ]\cdot \lambda =\sigma (\lambda)\cdot [\sigma ]\mbox{ for } \lambda \in K,\
\sigma \in G,\]
 \[ [\sigma _1\sigma _2]=c(\sigma _1,\sigma _2)[\sigma _1]\cdot [\sigma _2] .\]
Then $A$ is a central simple algebra with center $k$. Since $k$ is a $C_1$-field
there exists an isomorphism $I:A\rightarrow Matr(m,k)$. The latter algebra can be identified
(by standard Galois theory) with the group algebra $K[G]=\oplus K\cdot \sigma$. 
Suppose that we knew already elements $f_{\sigma}\in K^*$ satisfying
$f_{\sigma \tau }=c(\sigma ,\tau )\cdot f_\sigma \cdot \ ^\sigma f_\tau$. 
 Then $I_{0} : A \mapsto K[G]$, given by $I_{0}(\sum \lambda_{\sigma}[\sigma])=
 	\sum \lambda_{\sigma} f_{\sigma}.\sigma$ is an isomorphism and is also $K$-linear.
By the Skolem-Noether theorem, any isomorphism $I$ of $k$-algebras has the form
$I(\sum \lambda_{\sigma}[\sigma])=x^{-1}\{\sum \lambda_{\sigma} f_{\sigma}.\sigma\}x$,
where $x$ is an invertible element of $Matr(m, k)$. Our aim is to compute a 
$K$-linear isomorphism and in that case $x$ commutes with $K$ and therefore 
belongs to $K^*$ . Now $I$ has the form 
$I(\sum \lambda_{\sigma}[\sigma])= \sum \lambda_{\sigma} f_{\sigma}.\frac{\sigma(x)}{x}.\sigma$
 and thus
any $K$-linear isomorphism $A\rightarrow K[G]$ has the form $[\sigma]\mapsto g_{\sigma} · \sigma$ for suitable 
elements $g_{\sigma}\in K^{*}$ . It follows that 
	$g_{\sigma \tau }=c(\sigma ,\tau )\cdot g_\sigma \cdot \ ^\sigma g_\tau$. 
%

The computation of the isomorphism $I$ uses the reduced norm $Norm$ of $A$
(see e.g. \cite{F,Pi} or \cite{R} section 4, which adapts to $C_{1}$ fields).   With respect to a basis of $A$ over $k$, the reduced norm is a homogeneous form of degree $m$ in $m^2$ variables. Again the $C_1$ property of $k$ asserts that there are
non trivial solutions $a\in A,\ a\neq 0$ for $Norm(a)=0$.
An explicit calculation of such $a$ is possible (but rather expensive). Applying this several times one obtains the isomorphism $I:A\rightarrow Matr(m,k)$.\\
\begin{example}
Consider the case where
$K\supset k$ has degree 2 and $c$ is a 2-cocycle for $G=\{1,\sigma \}$ with values in
$K^*$.

One easily sees that the 2-cocyle can be given by $c(1,1)=c(1,\sigma )=c(\sigma ,1)=1$
and $c(\sigma ,\sigma )=\alpha ^{-1} \in k^*$. The $K$-linear morphism
$\phi :A:=K[1]\oplus K[\sigma ]\rightarrow K[G]=K\oplus K\sigma$ should have the form
$\phi ([1])=1,\ \phi ([\sigma ])=f\sigma$ with $f\in K^*$. We have to find $f$. Now the
condition is $\alpha =f\sigma (f)$.   Write $K=k\oplus  kw$ with $w^2\in k^*$ and write
$f=a+bw$. Then we have to solve $a^2-b^2w^2=\alpha$. 
Consider the equation
$X_1^2-X_2^2w^2-X_3^2\alpha =0$. By the $C_1$-property of $k$ there is a solution
$(x_1,x_2,x_3)\neq 0$.  Now $x_3\neq 0$, since $w^2\in k^*$ is not a square. Then we
can normalize to $x_3=1$ and the problem is solved.
\end{example}
(5) For $\dim N=2$ and $M$ of any dimension we will give in the Appendix 
an easier algorithm for descent fields, not using the 2-cocycle $c$ explicitly (and recall the former algorithms of
\cite{H3,C-W,N-vdP}).\\
(6) Let again a Galois extension $k\subset K$ with Galois group $G$ and 2-cocycle
$\overline{c}\in H^2(G,C^*)$ be given. The 2-cocycle class has finite order (dividing
$s$) and corresponds to a short exact sequence
$1\rightarrow Z\rightarrow G^+\rightarrow G\rightarrow 1$, where $Z$ is a finite cyclic group, lying in the center of $G^+$. Suppose that the Galois extension $k\subset K^+$
with group $G^+$ is such that $(K^+)^Z=K$. Then the image of $\overline{c}$ in
$H^2(G^+,C^*)$ is trivial and the descent condition holds for the field $K^+$. The
$C_1$-property of the field $k$ guarantees the existence of $K^+$, however there seems
to be no explicit algorithm, based on the $C_1$-property, producing an $K^+$.  
Examples 4.4 are based on this remark.\\ 
(7). Finally, we note that $H^2(Gal(K/k),\mu _s)=1$ if $g.c.d.([K:k],s)=1$. In that
case there is no field extension needed for the descent. }\hfill $\square$ \end{remarks}

\subsection {Description of the algorithm for the general case}
We will search for a decomposition $L=M\otimes N$ with $\det N=1$.
The module $\overline{k}\otimes L$ can be written as
$M\otimes _k(\overline{k}\otimes _kN)=M\otimes _k(\overline{k}\otimes _CV)$
where $V$ is the solution space of $N$. The absolutely irreducible left hand factors
$F$ of $op(L,\ell )$ correspond to the 1-dimensional subspaces $W$ of $V$.

We suppose that at least one $F$ is given. Let $K_0\supset k$ denote the
field extension generated by the coefficients of $F$. Let $K_1$ be the normal 
closure of $K_0$.  For each $\sigma \in Gal(K_1/k)$ one considers the absolutely irreducible left hand factor $\sigma (F)$. This factor is over $\overline{k}$ equivalent  
to $F$. We have to compute the field extension of $K_1$ needed for this equivalence.  

An algorithm in terms of differential modules (which easily translates in terms of
differential operators) is based upon the following lemma.

\begin{lemma} Let $A$ be an irreducible differential module over $\overline{k}$.
Then the  differential module ${\rm Hom}(A,A)$ over $\overline{k}$ has only one 
1-dimensional submodule, namely $\overline{k}\cdot id_A$.
\end{lemma}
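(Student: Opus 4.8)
The plan is to use the irreducibility of $A$ to reduce the statement to Schur's lemma for differential modules. A one-dimensional submodule $B$ of $\mathrm{Hom}(A,A)$ is spanned over $\overline{k}$ by some nonzero element, which we may view as a $\overline{k}$-linear map $\phi : A \to A$; the condition that $B=\overline{k}\phi$ is a differential submodule means exactly that $\partial \phi = \psi$ for some scalar multiple $\psi$ of $\phi$, i.e. there is $g \in \overline{k}$ with $\partial\phi = g\cdot \phi$ in $\mathrm{Hom}(A,A)$. Unwinding the connection on $\mathrm{Hom}(A,A)$, this says $\phi$ is a \emph{horizontal} morphism from $A$ to the twisted module $A' := (\partial - g)\otimes A$ (equivalently $\phi$ intertwines $\partial_A$ and $\partial_A + g$); concretely, $\phi(\partial a) = \partial(\phi(a)) - g\,\phi(a)$ for all $a\in A$.

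First I would observe that $\ker\phi$ and $\mathrm{im}\,\phi$ are differential submodules of $A$: since $\phi$ is a morphism of differential modules $A \to A'$, its kernel (in $A$) and image (in $A'$, hence a $\overline{k}$-subspace of $A$ stable under $\partial_A + g$, but also then a $\overline{k}[\partial]$-submodule once we track the twist back) are $\partial$-stable subspaces. Because $A$ is irreducible and $\phi \ne 0$, we get $\ker\phi = 0$ and $\mathrm{im}\,\phi = A$, so $\phi$ is an isomorphism of $\overline{k}$-vector spaces. Then I would take determinants: $\Lambda^a\phi$ is a nonzero scalar, so taking $\det$ of the relation $\partial_A\circ\phi = \phi\circ\partial_A + g\phi$ forces a relation between $\det A$ and $\det A'$. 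More precisely, $\phi$ being horizontal $A \to (\partial-g)\otimes A$ gives an isomorphism $\det A \cong \det((\partial-g)\otimes A) = (\partial - ag)\otimes \det A$ where $a = \dim A$; since $\det A$ is one-dimensional, tensoring by $(\det A)^{-1}$ shows $(\partial - ag)$ is the trivial module, i.e. $ag = \frac{h'}{h}$ for some $h \in \overline{k}^*$.

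At this point the field $\overline{k}$ being algebraically closed lets me extract a $2a$-th (or $a$-th) root: I can write $g = \frac{f'}{f}$ for a suitable $f \in \overline{k}^*$ (take $f$ with $f^a = h$, possible since $\overline{k}$ is algebraically closed). Then $f^{-1}\phi : A \to A$ satisfies $\partial(f^{-1}\phi) = -\frac{f'}{f}f^{-1}\phi + f^{-1}\partial\phi = -g f^{-1}\phi + f^{-1}g\phi = 0$, so $f^{-1}\phi$ is a horizontal endomorphism of $A$. Now Schur's lemma applies directly: a horizontal endomorphism of an irreducible differential module over a field whose constants are algebraically closed (here the constants of $\overline{k}$ are $C$, algebraically closed) is a scalar in $C$, because its eigenspaces are differential submodules. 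Hence $f^{-1}\phi = \lambda\,\mathrm{id}_A$ with $\lambda \in C^*$, so $\phi = \lambda f\cdot \mathrm{id}_A$ and therefore $B = \overline{k}\phi = \overline{k}\cdot \mathrm{id}_A$, which is the claim.

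The main obstacle I anticipate is bookkeeping around the twist: carefully identifying "one-dimensional submodule of $\mathrm{Hom}(A,A)$" with "horizontal map $A \to (\partial-g)\otimes A$" and making sure the determinant computation correctly yields that $ag$ is logarithmic, so that a root $f$ with $\frac{f'}{f} = g$ exists in $\overline{k}$. Once that logarithmic-derivative normalization is in hand, everything collapses to the standard Schur argument, which is routine. An alternative, slightly slicker route avoiding determinants: note that the submodule $B \subset \mathrm{Hom}(A,A)$ and the sub\emph{algebra} structure interact — if $\phi \in B$ is invertible then $\phi^{-1}$ and all powers $\phi^n$ lie in successive one-dimensional submodules, and comparing $\det$ still gives the constraint; but the determinant argument above is the cleanest way to pin down $g$ and I would present that.
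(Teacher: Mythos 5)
Your proof is correct, but it follows a genuinely different route from the paper's. The paper passes to the solution space $V$ and the differential Galois group $G$ of $A$: a $1$-dimensional submodule of ${\rm Hom}(A,A)$ gives a $G$-eigenvector $f\in {\rm Hom}_C(V,V)$ with character $c:G\to C^*$; applying $\Lambda^a$ shows $c^a=1$, and connectedness of $G$ (which holds because $\overline{k}$ is algebraically closed) forces $c=1$, after which Schur's lemma finishes. You instead stay entirely at the level of differential modules: the generator $\phi$ with $\partial\phi=g\phi$ is a horizontal isomorphism from $A$ to a rank-one twist of $A$, taking $\det$ shows $ag$ is a logarithmic derivative, and algebraic closedness of $\overline{k}$ lets you extract an $a$-th root so that $g=\frac{f'}{f}$ and $f^{-1}\phi$ is a horizontal endomorphism, to which Schur's lemma applies. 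The two arguments are structurally parallel --- your root extraction in $\overline{k}^*$ plays exactly the role of the connectedness of $G$ (both say that $a$-torsion in the relevant rank-one objects is trivial over $\overline{k}$), and both hinge on the $\Lambda^a$ trick --- but yours avoids Picard--Vessiot theory altogether, needing only Schur's lemma for differential modules over a field with algebraically closed constants, which is arguably more self-contained. Two small points of care: your notation $(\partial-g)\otimes A$ for the twist has a sign ambiguity, but your explicit formula $\phi(\partial a)=\partial(\phi(a))-g\phi(a)$ and the final cancellation $\partial(f^{-1}\phi)=0$ are consistent and correct (and in any case a rational multiple of a logarithmic derivative is again one in $\overline{k}$, so the sign is harmless); and in the Schur step one should note that the eigenvalues of a horizontal endomorphism lie in $C$ (its characteristic polynomial has constant coefficients), so that $\ker(f^{-1}\phi-\lambda)$ is indeed a differential submodule for the relevant $\lambda$.
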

\begin{proof}  It is possible to prove this by using \cite{N-vdP} and irreducible representations of semi-simple Lie algebras. 

However, a more down-to-earth proof is the following. Let $V$ be the solution space of $A$. This is a  $C$-vector space of dimension equal to 
$a:=\dim _{\overline{k}}A$, equipped with a faithful irreducible action of the differential Galois group $G$ of $A$.
The group $G$ is connected since $\overline{k}$ is algebraically closed.
A 1-dimensional submodule of ${\rm Hom}(A,A)$ corresponds to a 1-dimensional subspace $Cf$ of ${\rm Hom}_C(V,V)$, invariant under the action of $G$. There is a homomorphism $c:G\rightarrow C^*$ such that $gfg^{-1}=c(g)\cdot f$ holds 
for all $g\in G$. The kernel of $f$ is $G$-invariant and is $\{0\}$ since the representation is irreducible. The action of $G$ on 
${\rm Hom}(\Lambda ^aV,\Lambda ^aV)$ is trivial. In particular, the isomorphism
$\Lambda ^a(f) :\Lambda ^aV\rightarrow \Lambda ^aV$ is invariant under
$G$. Also $g(\Lambda ^a(f))g^{-1}=c(g)^a\cdot \Lambda ^a(f)$ and 
$c(g)^a=1$. Since $G$ is connected, $c(g)=1$ for all $g\in G$. Thus $f$ is $G$-invariant and is a multiple of $id_V$ since the representation is irreducible. 
\end{proof}

\begin{corollary} The  differential module 
\[T(\sigma ):={\rm Hom}(K_1[\partial ]/K_1[\partial ]\sigma (F),K_1[\partial ]/K_1[\partial ]F)\]
over $K_1$ has a single 1-dimensional submodule $A(\sigma )$. Moreover, the 
Picard--Vessiot field of $A(\sigma )$ is a finite cyclic extension $K_1'\subset \overline{k}$ of $K_1$. \end{corollary}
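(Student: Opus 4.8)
\noindent\emph{Proof plan.} The strategy is to pass to $\overline{k}$, to recognize the base change of $T(\sigma)$ as an endomorphism module to which the preceding lemma applies, to descend the resulting unique line back to $K_1$, and finally to read off the shape of its Picard--Vessiot field. Concretely, set $\mathcal{A}:=K_1[\partial]/K_1[\partial]F$ and $\mathcal{B}:=K_1[\partial]/K_1[\partial]\sigma(F)$, so that $T(\sigma)={\rm Hom}(\mathcal{B},\mathcal{A})$; both $\mathcal{A}$ and $\mathcal{B}$ are absolutely irreducible differential modules over $K_1$. As recalled before the lemma, every absolutely irreducible left hand factor of $op(L,\ell)$ has, over $\overline{k}$, quotient module isomorphic to $\overline{M}$; hence $\overline{\mathcal{A}}:=\overline{k}\otimes_{K_1}\mathcal{A}$ and $\overline{\mathcal{B}}:=\overline{k}\otimes_{K_1}\mathcal{B}$ are both isomorphic to $\overline{M}$, which is exactly the equivalence of $\sigma(F)$ and $F$ over $\overline{k}$.

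Since internal Hom commutes with the base change $K_1\subset\overline{k}$, one obtains $\overline{k}\otimes_{K_1}T(\sigma)={\rm Hom}(\overline{\mathcal{B}},\overline{\mathcal{A}})\cong{\rm Hom}(\overline{\mathcal{A}},\overline{\mathcal{A}})={\rm End}(\overline{\mathcal{A}})$. By the preceding lemma this module has exactly one $1$-dimensional submodule, namely $\overline{k}\cdot id_{\overline{\mathcal{A}}}$; and since $\partial(id_{\overline{\mathcal{A}}})=0$, that submodule is the trivial module ${\bf 1}_{\overline{k}}$.

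The descent step comes next. Write $U\subset\overline{k}\otimes_{K_1}T(\sigma)$ for this unique $1$-dimensional submodule. The group $Gal(\overline{k}/K_1)$ acts on $\overline{k}\otimes_{K_1}T(\sigma)$ through the coefficients, commutes with $\partial$, and carries $1$-dimensional submodules to $1$-dimensional submodules; as there is only one, $U$ is $Gal(\overline{k}/K_1)$-stable. Galois descent for the $K_1$-module $T(\sigma)$ then produces a $K_1$-subspace $A(\sigma)\subset T(\sigma)$ with $\overline{k}\otimes_{K_1}A(\sigma)=U$, and $A(\sigma)$ inherits $\partial$-stability from $U$. Thus $A(\sigma)$ is a $1$-dimensional differential submodule of $T(\sigma)$ over $K_1$, and it is the only one, since any such submodule base-changes to a $1$-dimensional submodule of $\overline{k}\otimes_{K_1}T(\sigma)$, hence to $U$.

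Finally, the Picard--Vessiot field of $A(\sigma)$ over $K_1$ is computed as follows. Since $\overline{k}\otimes_{K_1}A(\sigma)\cong{\bf 1}_{\overline{k}}$, the $1$-dimensional module $A(\sigma)$ has a solution $g\in\overline{k}^{*}$, so its Picard--Vessiot field over $K_1$ is $K_1':=K_1(g)$, a finite extension of $K_1$ contained in $\overline{k}$. For $\tau\in Gal(K_1'/K_1)$, both $g$ and $\tau(g)$ are solutions of $A(\sigma)$ over $K_1$, hence $\tau(g)/g$ is a constant and thus lies in $C^{*}$; so $\tau\mapsto\tau(g)/g$ is an injective homomorphism $Gal(K_1'/K_1)\to C^{*}$. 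Every finite subgroup of the multiplicative group of a field being cyclic, $K_1'/K_1$ is finite cyclic. The only delicate point is the descent of $U$ to $K_1$, but that is precisely where the uniqueness supplied by the preceding lemma --- and the $Gal(\overline{k}/K_1)$-stability it forces --- does the work, so I anticipate no genuine obstacle.
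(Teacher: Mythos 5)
Your proof is correct and follows essentially the same route as the paper: base change to $\overline{k}$, identify $\overline{k}\otimes_{K_1}T(\sigma)$ with ${\rm Hom}(\overline{M},\overline{M})$, invoke the preceding lemma for the unique line $\overline{k}\cdot id$, use that uniqueness to get $Gal(\overline{k}/K_1)$-stability and descend to $A(\sigma)$, and conclude finiteness and cyclicity of the Picard--Vessiot extension from the triviality of $\overline{k}\otimes A(\sigma)$. You merely spell out two steps the paper leaves implicit (the Galois-descent mechanism for the stable line, and the embedding $Gal(K_1'/K_1)\hookrightarrow C^*$ giving cyclicity), which is fine.
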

\begin{proof} $S:=\overline{k}\otimes _{K_1}T(\sigma )$ is isomorphic to 
${\rm Hom}(\overline{M},\overline{M})$, where 
$\overline{M}:=\overline{k}\otimes _k M$.  
By Lemma 3.5, $S$ has a unique 1-dimensional submodule, say, $B$. By uniqueness, $B$ is invariant under the action of $Gal(\overline{k}/K_1)$ on $S$ and has therefore the form $\overline{k}\otimes _{K_1}A(\sigma )$ for some submodule $A(\sigma )$ of $T(\sigma )$. The uniqueness of $A(\sigma )$ is clear.

Further $\overline{k}\otimes A(\sigma )$ is isomorphic to the trivial differential module $\overline{k}\cdot id_{\overline{M}}$. Thus the Picard--Vessiot field $K_1'$
is a finite extension of $K_1$ and this extension is cyclic since $A(\sigma )$ has
dimension 1. \end{proof}

By factorization the 1-dimensional submodule $A(\sigma )$ can be obtained.
The Picard--Vessiot field of $A(\sigma )$ is a finite cyclic extension $K_1'\subset \overline{k}$ of $K_1$.
  Then $\ker (\partial ,K_1'\otimes T(\sigma) )$ has dimension 1 over $C$ and
a generator $\phi(\sigma)$ of this kernel is an isomorphism
$\phi (\sigma ):K_1'[\partial ]/K_1'[\partial ]\sigma (F)\rightarrow K_1'[\partial ]/K_1'[\partial ]F$. \\

The field $K\subset \overline{k}$ is the compositum of the Picard--Vessiot 
fields  of all $A(\sigma )$. We note that $K$ is the field called ``{\it stabilisateur}''
in \cite{C-W}.
The isomorphisms 
$\phi (\sigma ):K[\partial ]/K[\partial ]\sigma (F)\rightarrow K[\partial ]/K[\partial ]F$
are now also known, they are $K$-rational solutions of the  modules 
$K\otimes _{K_1}A(\sigma )$.  The 2-cocycle $c$ for $Gal(K/k)$ with  values in $C^*$ 
has the property that $c^s$ is trivial by the assumption that $\det N={\bf 1}$.
Then, as in Subsection 3.1, one can construct a cyclic extension $K'\supset K$ such that
the module $K'[\partial ]/K'[\partial ]F$ descends to $k$. The result is called $M$.

The module $N$ is obtained by computing the unique irreducible direct summand of  $M^*\otimes _k L$ having dimension $s$. Indeed, this direct summand of 
$M^*\otimes _k L=M^*\otimes_kM\otimes _kN={\rm Hom}(M,M)\otimes _kN$
is $(k\cdot id_M)\otimes _kN\cong N$.

\subsubsection{An exemple for the construction of a descent field}
Consider the irreducible operator
$$ L  = \partial^{4}+{\frac { \left( -4+8 z \right) }{{z}^{2}-1}}\partial^3
+{\frac { \left( 53 {z}^{2}-40 z-1 \right) }{4 \left( {z}^{2}-1 \right) ^{2}}}\partial^2
 +{\frac { \left( 5 {z}^{3}-{z}^{2}-13 z-3 \right)}{ 2\left( {z}^{2}-1 \right) ^{3}}} \partial
+{\frac {61 {z}^{2}+64 z+67}{ 16\left( {z}^{2}-1 \right) ^{4}}}.$$
The algorithm of \cite{C-W} produces the following absolutely irreducible right-hand factor
$$L_{1}=\partial^{2} +{\frac { \left( 3\,{\it u}+4\,{z}^{2}-26\,z+24\right)}{2 \left( {z}^{2}-1 \right)  \left( 4\,z-5 \right) }}\partial
+{\frac { \left( -3\,z-6 \right) {\it u}+45\,{z}^{2}-40\,z-13}{4 \left( {z}^{2}-1 \right) ^{2} \left( 4\,z-5 \right) }},$$
where $u^2=z^2-1$. It is isomorphic to its conjugate $L_{2}$ over
$K=k(\Phi)$ with $\Phi^4-2z\Phi^2+1=0$ (or $\Phi=\sqrt{z-u}$). Explicitely,
there exists $S\in K[\partial]$ such that $L_{2}.R=S.L_{1}$ with 
$$R={\frac { \left( 1-2\,z+2\,{\it u} \right) }{\Phi( 4\,z- 5) } }
\left( 2\, \left( {z}^{2}-1 \right) \partial-z-2\right),$$
i.e. $R$ maps a solution of $L_{1}$ to a solution of $L_{2}$. 
\\
Let $G=Gal(K/k)$, acting via
$\sigma_{1}(\Phi)=\Phi$, $\sigma_{2}(\Phi)=-\Phi$, $\sigma_{3}(\Phi)=1/\Phi$,  $\sigma_{4}(\Phi)=-1/\Phi$.
The $2$-cocycle $c$ is given by 
$c(\sigma_{2},\sigma_{3})=c(\sigma_{2},\sigma_{4})=c(\sigma_{4},\sigma_{3})=c(\sigma_{4},\sigma_{4})=-1$
(and $c(\sigma_{i},\sigma_{j})=1$ otherwise).\\
Remark 3.4 part (4) (see also Example 4.3 in the Appendix) produce the elements $f_{\sigma}$ which are, respectively,  $1,1,\Phi,\Phi$~;
hence the construction from section 3.2.1 shows that $K'=K(\sqrt{\Phi})$ is a descent field.
An anihilating operator for $\sqrt{\Phi}$ is
$$N={\partial}^{2}+{\frac {z}{{z}^{2}-1}}\partial-1/16\, \left( {z}^{2}-1 \right) ^{-1}$$
(one could find it by writing $K'$ as a $k[\partial]$-module and decomposing it).
\\
Decomposing $L\otimes N^\star$ (over $k$), we then obtain
$$M={\partial}^{2}-{\frac {2}{z-1}}\partial +{\frac {35\,z+37}{16  \left( z+1 \right)  \left( z-1 \right) ^{2}}}$$
and $L_{1}$ is isomorphic over $K'$ to $M$. At the end of the appendix, we give alternative (easier) methods
to handle such small order examples.

\section{Appendix}

The following lemma makes the relation between 2-cocycles and descent for
1-dimensional differential modules more explicit. We present some examples
and present an algorithm producing descent fields for the case $\dim N=2$.

 \begin{lemma} Let $k$ be a differential field having the properties: the field of
 constants $C$ is algebraically closed and has characteristic 0; $k$ is a $C_1$-field.

  Let $K/k$ be a Galois extension (finite or infinite). The collection $H(K)$ of the (isomorphy classes of the) 1-dimensional differential modules $A$ over $K$, satisfying 
  $^\sigma A\cong A$ for all $\sigma \in Gal(K/k)$, forms a group with respect to the 
  operation tensor product. Let $h(K)\subset H(K)$ denote the subgroup consisting
   of the modules of the form $K\otimes _kB$, where $B$ is a 1-dimensional differential module over $k$.

    There is a canonical isomorphism $H(K)/h(K)\rightarrow H^2(Gal(K/k),C^*)$.
   \end{lemma}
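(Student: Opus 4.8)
The plan is to build the isomorphism $H(K)/h(K)\to H^2(Gal(K/k),C^*)$ by attaching to each $A\in H(K)$ the 2-cocycle measuring the failure of its twist-isomorphisms to be compatible, exactly as in the proof of the Proposition. Concretely, given $A\in H(K)$ choose for each $\sigma\in Gal(K/k)$ an isomorphism $\phi(\sigma)\colon {}^\sigma A\to A$, or equivalently a $\sigma$-linear bijection $\Phi(\sigma)\colon A\to A$ commuting with $\partial$. Since $A$ is $1$-dimensional, $\Phi(\sigma\tau)$ and $\Phi(\sigma)\Phi(\tau)$ are two $\sigma\tau$-linear automorphisms of $A$, hence differ by a scalar $c(\sigma,\tau)\in C^*$ (the only $\partial$-commuting $K$-linear automorphisms of a $1$-dimensional differential module are multiplication by constants). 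One checks $c$ is a 2-cocycle, and that changing the choice of the $\Phi(\sigma)$ changes $c$ by a coboundary; this gives a well-defined map $H(K)\to H^2(Gal(K/k),C^*)$. In the infinite case one should remark that the $\Phi(\sigma)$ can be chosen so that $c$ is locally constant (continuous), so that it represents a class in the continuous cohomology group; this is automatic because $A$ is already defined over a finite subextension.

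**Next**, I would verify the map is a group homomorphism. If $A_1,A_2\in H(K)$ with data $\{\Phi_1(\sigma)\},\{\Phi_2(\sigma)\}$ and cocycles $c_1,c_2$, then $\{\Phi_1(\sigma)\otimes\Phi_2(\sigma)\}$ is a valid choice of twist-data for $A_1\otimes_K A_2$, and its cocycle is visibly $c_1c_2$; similarly the trivial module $\mathbf 1_K$ with $\Phi(\sigma)=\sigma$ gives the trivial cocycle. So the map respects tensor product.

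**Then** I would identify the kernel with $h(K)$. If $A=K\otimes_k B$ for a $1$-dimensional module $B$ over $k$, then the canonical $\sigma$-semilinear maps $\Phi(\sigma)=\sigma\otimes\mathrm{id}_B$ satisfy $\Phi(\sigma\tau)=\Phi(\sigma)\Phi(\tau)$ exactly, so the cocycle is trivial and $h(K)$ lies in the kernel. Conversely, if the cocycle of $A$ is cohomologically trivial, then after multiplying the $\Phi(\sigma)$ by suitable elements of $C^*$ one obtains genuine descent data $\Phi(\sigma\tau)=\Phi(\sigma)\Phi(\tau)$; setting $B:=A^{Gal}=\{a\in A\mid \Phi(\sigma)a=a\ \forall\sigma\}$ gives a $1$-dimensional differential module over $k$ with $K\otimes_k B\xrightarrow{\sim}A$ — this is the descent argument already carried out in the proof of the Proposition, applied to a $1$-dimensional module. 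Hence the kernel is exactly $h(K)$, and the induced map $H(K)/h(K)\to H^2(Gal(K/k),C^*)$ is injective.

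**Surjectivity**, finally, is where the $C_1$-hypothesis does real work, and this is the step I expect to be the main obstacle — or rather the step requiring the most care. Given a class $\bar c\in H^2(Gal(K/k),C^*)=H^2(Gal(K/k),\mu_\infty)$ represented by a locally constant cocycle $c$, I want to produce $A\in H(K)$ with that invariant. The $C_1$-property gives $H^2(Gal(K/k),K^*)=\{1\}$, hence elements $\{f_\sigma\}\subset K^*$ with $f_{\sigma\tau}=c(\sigma,\tau)\,f_\sigma\,{}^\sigma\! f_\tau$; then, exactly as in Section 3.2.1, $v:=\frac{-1}{[K:k]}\sum_{\tau}\frac{f_\tau'}{f_\tau}$ (in the finite case; in general use a finite layer through which everything factors) satisfies $\frac{f_\sigma'}{f_\sigma}=\sigma(v)-v$, so the module $A:=(\partial-v)$ over $K$ has $^\sigma A\cong A$ via multiplication by $f_\sigma$, and the resulting cocycle is $c$. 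Thus $[A]\mapsto\bar c$ and the map is onto. The delicate points to get right are the continuity/locally-constant bookkeeping in the infinite Galois case (every gadget in sight is defined over a finite subextension, so this is a matter of phrasing rather than substance) and checking that $H^2(Gal(K/k),C^*)=H^2(Gal(K/k),\mu_\infty)$, which holds because $C^*$ is divisible so $C^*/\mu_\infty$ is a uniquely divisible, hence cohomologically trivial, torsion-free $\mathbb Q$-vector space. Assembling injectivity and surjectivity yields the claimed canonical isomorphism.
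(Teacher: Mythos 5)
Your proposal is correct and follows essentially the same route as the paper: attach to $A$ the obstruction $2$-cocycle coming from the twist-isomorphisms, identify the kernel with $h(K)$ via Hilbert 90 (your descent-data formulation is equivalent to the paper's explicit choice of $F\in K^*$ with $f_\sigma=\sigma F/F$), and get surjectivity from $H^2(Gal(K/k),K^*)=\{1\}$ together with $H^1(Gal(K/k),K)=0$. The only difference is notational --- you work with $\sigma$-linear maps $\Phi(\sigma)$ where the paper works directly with the elements $f_\sigma$ satisfying $\sigma(u)-u=f_\sigma'/f_\sigma$.
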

 \begin{proof} The first statement is obvious. Let the differential module $Ke$ with
 $\partial e=ue$ lie in $H(K)$. Then for any $\sigma \in Gal(K/k)$ there is an
 element $f_\sigma \in K^*$ such that $\sigma (u)-u=\frac{f_\sigma '}{f_\sigma }$.
 Define the 2-cocycle $c$ by $f_{\sigma \tau }=c(\sigma ,\tau )\cdot f_\sigma \cdot \
 ^\sigma f_\tau$ for all $\sigma ,\tau \in Gal(K/k)$.  Replacing the $f_\sigma$ by
 $d(\sigma )f_\sigma $, with $d(\sigma )\in C^*$, changes the 2-cocycle into an equivalent one. Tensoring $Ke$ with an element of $h(K)$ changes $u$ into
 $u+v$ with $v\in k$ and this does not effect the $f_\sigma$. Thus the above construction defines a homomorphism $H(K)/h(K)\rightarrow H^2(Gal(K/k),C^*)$.

 This homomorphism is injective since the triviality of the 2-cocycle class $\overline{c}$
 implies that $f_{\sigma \tau }=  f_\sigma \cdot \  ^\sigma f_\tau$. Since
 $H^1(Gal(K/k),K^*)=\{1\}$ there is an $F\in K^*$ with $f_\sigma =\frac{\sigma F}{F}$
 for all $\sigma$. Thus $\sigma (u)-u=\sigma (\frac{F'}{F})-\frac{F'}{F}$ and 
 $\overline{u}:=u-\frac{F'}{F}$ is invariant under $Gal(K/k)$ and belongs to $k$.
 Now $Ke=K\cdot \overline{e}$ with $\overline{e}:=F^{-1}e$ and $\partial \overline{e}=\overline{u} \overline{e}$. Thus $Ke$ belongs to $h(K)$.

 The homomorphism is surjective.   Indeed,  consider a 2-cocycle $c$ for $Gal(K/k)$
 with values in $C^*$. Since $H^2(Gal(K/k),K^*)=\{1\}$ there are elements
 $f_\sigma \in K^*$ such that $f_{\sigma \tau }=c(\sigma ,\tau )\cdot f_\sigma \cdot \
 ^\sigma f_\tau$ for all $\sigma ,\tau \in Gal(K/k)$. Then 
 $\frac{f'_{\sigma \tau }}{f_{\sigma \tau }}=\frac{f_\sigma '}{f_\sigma}+\sigma (
 \frac{f'_\tau}{f_\tau})$ and since $H^1(Gal(K/k),K)=\{0\}$ there is an element
 $u\in K$ such that $\sigma (u)-u=\frac{f'_\sigma }{f_\sigma }$ for all 
 $\sigma \in Gal(K/k)$.  Thus the class of $c$  is the image of the module $Ke$ with 
 $\partial e=ue$, belonging to $H(K)$.   \end{proof} 

\begin{example} {\rm Let $k=C(z)$ and $K=C(t)$ with $t^2=z$. Let $\sigma$ be the non trivial element in $Gal(K/k)$. \\
The module $Ke$ with $\partial e=ue$ belongs to $h(K)$ if and only if 
\[u=w+\frac{1}{2t}\sum _{\alpha \neq 0}\frac{d_\alpha \sqrt{\alpha}}{z-\alpha }
\mbox{ with all }d_\alpha \in \mathbb{Z}  \mbox{ and } w\in k .\]
(We note that $\sqrt{\alpha}$ denotes an arbitrary choice of a square root for
$\alpha\in C^*$).
This follows from the computation: if $K^*\ni F=t^{n_0}\prod _{\beta \neq 0}
(t-\beta )^{n_\beta}$, then 
\[\frac{F'}{F}=\frac{n_0}{2t^2}+\sum _{\beta ^2\neq 0}
\frac{n_\beta +n_{-\beta}}{2(t^2-\beta ^2)}+ 
\frac{1}{2t}\sum _{\beta ^2\neq 0}\frac{n_\beta \beta -n_{-\beta }\beta}{t^2-\beta ^2}.\]

Consider now $Ke$ with $\partial e=ue$, belonging to $H(K)$. Write 
$u=a+\frac{1}{2t}b$ with $a,b\in k$. By assumption $\frac{-1}{t}b=\sigma (u)-u$
has the form $\frac{G'}{G}$ for some $G\in K^*$. Write 
$G=t^{m_0}\prod _{\beta \neq 0} (t-\beta )^{m_\beta}$, then, using the above formula,
one finds that $m_0=0$ and $m_{-\beta}=-m_{\beta}$. Thus
$b=\sum _{\alpha \neq 0}\frac{m_{\sqrt{\alpha}}\sqrt{\alpha}}{z-\alpha }$, where
$\alpha =\beta ^2$ (and some choice of $\sqrt{\alpha}$ is made). According
to the above result, one has  that $Ke$ lies in $h(K)$. Therefore 
$H(K)/h(K)=\{0\}$.  This is in accordance with $H^2(Gal(K/k),C^*)=\{1\}$ (since
$Gal(K/k)$ is cyclic).    }\hfill  $\square$ \end{example}

\begin{example} {\rm The group $D_2\cong (\mathbb{Z}/2\mathbb{Z})^2$
has the property that $H^2(D_2,C^*)$ contains an element of order 2. In order to
obtain this element for an obstruction to descent we consider the following 
differential fields
\[ k=C(z)\subset K=C(t)\subset K'=C(s)\mbox{ with } z=t^2+t^{-2},\ t=s^2 .\]
The group $Gal(K/k)=\{1,a,b,ab\} \cong D_2$ where the elements $a,\ b$ are given by
$a(t)=-t$ and $b(t)=t^{-1}$. One considers the differential module $Ke$ with 
$\partial e=ue$ and $u=\frac{1}{4(t^2-t^{-2})}=\frac{s'}{s}$. One observes that
$a(u)-u=0$ and $b(u)-u=\frac{(t^{-1})'}{t^{-1}}$. Thus we may take 
$f_1=1,\ f_a=1,\ f_b=t^{-1},\ f_{ab}=t^{-1}$. The corresponding 2-cocycle $c$ has values
in $\{\pm 1\}$ and  $f_{ab}=c(a,b)\cdot f_a\cdot \ ^af_b$ holds with $c(a,b)=-1$.  It is
easily verified that $\overline{c}\in H^2(D_2,C^*)$ is not trivial. The module 
$K'\otimes _Ke$ is trivial since $u=\frac{s'}{s}$ and therefore descends to $k$.
In particular $K'$ is a descent field for $(K,c)$.}\hfill $\square$ \end{example}

We note that for a finite group $G$, acting trivially on $C^*$, the cohomology
group $H^2(G,C^*)$ is called the Schur multiplier of $G$. This group is well
studied, see \cite{Su}. 

\begin{examples}{\rm  A construction of many examples of the type
$L=M\otimes _kN$ under consideration, involving a non trivial descent problem,
is the following.

Suppose that $G^+$ is given as a finite irreducible subgroup of ${\rm GL}(V)$ where $\dim _CV=n>1$  and that the center $Z$ of $G^+$ is non trivial.  Further,
assume that a Galois extension $K^+\supset k=C(z)$  with Galois group $G^+$ is given. Then
$K:=(K^+)^Z$ is a Galois extension of $k$ with group $G:=G^+/Z$.

Consider the  differential  module $K^+\otimes _CV$ over $K^+$, defined by
$\partial ( f\otimes v)=f'\otimes v$ for $ f\in K^+,\ v\in V$. This is a trivial differential module. The action of $G^+$ on $K^+\otimes _CV$ is defined by 
$\sigma (f\otimes v)= \sigma (f)\otimes \sigma (v)$.  This action commutes with
$\partial$. 

Define $N:=(K^+\otimes _CV)^{G^+}$. This is an irreducible differential module over $k$ with Picard--Vessiot field $K^+$. The subfield $K$ is the smallest field such that $K\otimes _kN$ is a direct sum of isomorphic copies of a 
1-dimensional differential module $D$ over $K$. In particular $^\sigma D\cong D$
for all $\sigma \in Gal(K/k)$. The 2-cocycle attached to $D$ is non trivial if there
is no subgroup $H\subset G^+$ mapping bijectively to $G$. More precisely,
$K^+$ is a smallest field over which the 2-cocycle becomes trivial if and only
if no proper subgroup $H$ of $G^+$ maps surjectively to $G$. 

Take now any absolutely irreducible differential module $M$ of dimension 
$m>1$ over $k$. Then $L:=M\otimes _kN$ has the required properties.
For $\dim N=2$ there is a rich choice of examples and there are similar explicit  cases 
for $n=3$, see \cite{vdP-U}.  }\hfill $\square$ \end{examples}


\begin{example} Algorithms for the descent field for the case $\dim N=2$.\\ {\rm
Let $L=M\otimes N$ be given with $M$ absolutely irreducible and an irreducible
$N$ with $\dim N=2,\ \det N={\bf 1}$ and finite differential Galois group $G^+$.
For this case, methods for finding $N$ (hence the descent field) and $M$
are proposed, e.g, in \cite{H3,C-W,N-vdP} (and references therein).

Below is another nice method, adapted to our case.
We have $G^+
\in \{D_k^{SL_2},\ A_4^{SL_2},\  S_4^{SL_2},\ A_5^{SL_2}\}\subset  {\rm SL}(2,C)$ 
with center $Z=\{\pm {1\ 0\choose 0\ 1 }\}$  and  there is no proper subgroup of 
$G^+$ mapping onto  $G:=G^+/Z\in \{D_k,\ A_4,\ S_4,\ A_5\}$. 
According to Lemma 3.2,
the 2-cocycle class $\overline{c}=\overline{d}\in H^2(G,C^*)$ is non trivial. The method
of Section 3 provides the field $K$ with $Gal(K/k)=G$, from the data $op(L,\ell )$ 
and an absolutely irreducible monic left hand factor $F$ of $op(L,\ell )$.

The (unknown) group $G^+$ is a subgroup of ${\rm SL}(V)$ with $\dim _CV=2$.
Write $W=sym^2V$ and let $S\in sym^2 (W)$ be a generator of the kernel of
$sym^2(W)\rightarrow sym^4(V)$. Then $S$ is a non degenerate symmetric form of degree two. The homorphism $\psi :{\rm SL}(V)\rightarrow {\rm SL}(W)$, defined by 
$A\mapsto A\underset{s}{\otimes} A$, has kernel $\{\pm {1\ 0\choose 0\ 1}\}$ and its image  is $\{B\in {\rm SL}(W)|\  S \mbox{ is invariant under  } B\}$.   One observes that 
$\psi (G^+)=G$. Conversely, for any subgroup $G\subset {\rm SL}(W)$ preserving
the form $S$, one has that $\psi ^{-1}(G)=G^+$.

Let $a\in K$ be a general element, then the orbit $Ga$ is a basis of $K/k$ and
the $C$-vector space with basis $Ga$ is the regular representation of $G$. This
vector space contains an irreducible representation $W$ of $G$ of dimension three.
In each of the cases for $G$, there exists a (unique) non degenerated symmetric form
$S$ on $W$ which is invariant under $G$. 

The unique monic differential operator $T_3\in K[\partial ]$ of degree 3 which is 0 on $W$ belongs to $k[\partial ]$ because $W$ is invariant under $G$. This operator (or the
corresponding differential module) is equivalent to the  second symmetric power of an operator
$T_2\in k[\partial ]$ (which can be found using \cite{H3}). Let $\tilde{K}$ denote the Picard-Vessiot field of $T_2$. Then
$[\tilde{K}:K]=2$.  Let $V\subset \tilde{K}$ denote the space of solutions of $T_2$.
Then $W=\{v_1v_2|\ v_2,v_2\in V\}=sym ^2V$. The differential Galois group of
$T_2$ is $\psi ^{-1}(G)$ and thus isomorphic to $G^+$. Hence $\tilde{K}$ is a descent
field for $(K,d)$ and then also for $(K,c)$. Using this descent field one computes
$M$ and $N$.\\

Yet another observation (though less practical) is that J.J.~Kovacic's fundamental algorithm for order 2 equations,
\cite{Ko}, could also be applied to $L=M\otimes N$. For example, the symmetric power
$sym^{m+1}(L)$ contains an irreducible factor over $k$, which is projectively isomorphic to $M$, 
for $m=2,4,6,12$ for the cases
$G^+=D_k^{SL_2},A_4^{SL_2},S_4^{SL_2},A_5^{SL_2}$.  More refined
factorisation patterns may be established for each of these cases.}\end{example}

\begin{example} The referee's example. {\rm The operator
\[ L_4=\partial ^4+\frac{6z}{z^2-1}\partial ^3+
\frac{1971 z^2-947}{288(z^2-1)^2}\partial ^2+\frac{27z}{32 (z^2-1)^2}\partial +
\frac{9}{4096 (z^2-1)^2}\mbox{ has the}\] 
\[ \mbox{absolutely irreducible {\em right} hand factor }L_2= \partial ^2+\frac{3z-\alpha +1}{6(z^2-1)}\partial
+\frac{3}{64 (z^2-1)},\]   
where $\alpha$ is a root of $T^4+12(z-1)T^2-32(z-1)T-12(z-1)^2=0$.

There are the following methods:\\
(1). By \cite{C-W,H3}. $L_4 = M\otimes N$ with $\det M=\det N={\bf 1}$. The two factors of  
$\Lambda ^2 L_4 = sym^2 (M)\oplus sym^2(N)$ are easily computed and,
using \cite{H3}, one finds $M$ and $N$.\\
(2). By  \cite{N-vdP}, Theorem 6.2. One computes $F\in sym^2( L_4)$ with 
$\partial F=0,\ F\neq 0$ and  a 2-dimensional isotropic subspace for $F$. From 
the last part of the proof of \cite{N-vdP}, Theorem 6.2 one reads off $M$ and $N$. \\
(3). Example 4.5 works here as follows. $K_0=k(\alpha)$ and its normal closure
$K_1$ has Galois group $A_4$ and $K=K_1$.  The $C$-vector space $W$ spanned by $A_4\alpha$ has dimension 3. The operator 
$T_3=\partial ^3+a_2\partial ^2+a_1\partial +a_0\in k[\partial ]$ with solution space
$W$  is determined by the equation $T_3(\alpha )=0$. This yields
\[T_3=\partial ^3+\frac{3z-1}{(z+1)(z-1)}\partial ^2+
\frac{27z+5}{36(z+1)^2(z-1)^2}\partial -\frac{9z+23}{36(z+1)^2(z-1)^3}.\]
\[\mbox{The operator } 
T_2=\partial ^2+\frac{3z-1}{3(z+1)(z-1)}\partial -\frac{3z-11}{48(z+1)(z-1)^2}\] 
 satisfies $sym^2(T_2)=T_3$ and  its Picard-Vessiot field (an extension of $k$ of degree $24$)
is a descent field. A minimum polynomial of an algebraic solution of $T_{2}$ is
$$P={Y}^{8}+ \frac{1}{3}\left( z-1 \right) {Y}^{4}+ \frac {4}{27}\left( z-1 \right) {Y}^{2}-{\frac {1}{108}}\, \left( z-1 \right) 
^{2}.$$
It factors over $k(\alpha)$ as 
$$  \left( {Y}^{2}+ \frac{\alpha}{6} \right)  \left( {Y}^{6}-\frac{\alpha}{6}\,{Y}^{4}
+ \frac{1}{3}\left( z-1+\frac{{\alpha}^{2}}{12} \right) {Y}^{2}-{\frac {{\alpha}^{3}}{216}}-\frac{1}{18} \left( z-1 \right) 
\alpha+{\frac {4}{27}}(z-1)\right) 
, $$
which illustrates the fact that $K^+$ is obtained from $K_{1}$ by adjunction of
a square root, here $\sqrt{-\frac{\alpha}{6}}$ (in fact, adjoining any solution of $T_{2}$ would do).
\\ Continuation of our method (decomposing $L_{4}\otimes T_{2}^*$ over $k$) then yields
$$M=\partial^{2}-\frac{1}{3}\,{\frac { \left( -1+3\,x \right)}{{x}^{2}-
1}}\partial +{\frac {1}{192}}\,{\frac {189\,{x}^{2}-96\,x+227}{  ({x}^{2}-1)^2   }}$$
and we may check that 
$$(M\otimes T_{2}).\left( (x^2-1)\partial \right) = \left( \left( {x}^{2}-1 \right) \partial+4\,x \right).L_{4}.$$
As solutions of both $M$ and $T_{2}$ can be expressed in terms of special functions (e.g using the
methods of van Hoeij), this allows to solve $L_{4}$ in terms of algebraic and special functions.

}

\end{example}

\begin{center}
{\bf Acknowledgments}
\end{center}
The authors are grateful to Mark van Hoeij and Lajos Ronyai and to the referee for interesting suggestions and 
 for providing Example 4.6 leading to an improvement of our paper.

   \end{document}